\title[Gabor wave front set of compactly supported distributions]
{The Gabor wave front set of compactly supported distributions}
\author[P. Wahlberg]{Patrik Wahlberg}
\address{Department of Mathematics, Linn{\ae}us University, SE--351 95 V\"axj\"o, Sweden}
\email{patrik.wahlberg[AT]lnu.se}
\numberwithin{equation}{section}          
\newtheorem{thm}{Theorem}
\numberwithin{thm}{section}
\newcommand{\rubrik}{}
\newtheorem{prop}[thm]{Proposition}
\newtheorem{cor}[thm]{Corollary}
\theoremstyle{definition}
\newtheorem{defn}[thm]{Definition}
\theoremstyle{remark}
\newcommand{\PWcc}[1]{\mathbf C^{#1}}
\newcommand{\PWrr}[1]{\mathbf R^{#1}}
\newcommand{\PWnn}[1]{\mathbf N^{#1}}
\newcommand{\PWro}{\mathbf R}
\newcommand{\PWco}{\mathbf C}
\newcommand{\PWno}{\mathbf N}
\newcommand{\PWzo}{\mathbf Z}
\newcommand{\PWSp}{\operatorname{Sp}}
\newcommand{\PWGL}{\operatorname{GL}}
\newcommand{\PWMp}{\operatorname{Mp}}
\newcommand{\PWWF}{\mathrm{WF}}
\newcommand{\PWcS}{\mathscr{S}}
\newcommand{\PWcF}{\mathscr{F}}
\newcommand{\PWcE}{\mathscr{E}}
\newcommand{\PWJ}{\mathcal{J}}
\newcommand{\PWep}{\varepsilon}
\newcommand{\PWeabs}[1]{\langle #1\rangle}
\newcommand{\PWre}{{\rm Re} \, }
\newcommand{\PWim}{{\rm Im} \, }
\newcommand{\PWKer}{\operatorname{Ker}}
\begin{document}

\begin{abstract}
We show that the Gabor wave front set of a compactly supported distribution equals zero times the projection on the second variable of the classical wave front set.
\end{abstract}

\keywords{Gabor wave front set, compactly supported distributions}
\subjclass[2010]{35A18, 35A21, 46F05, 46F12.}

\maketitle

\centerline{\emph{Dedicated to Luigi Rodino on the occasion of his 70$^{th}$ birthday}}

\section{Introduction}
\label{PWsec:introduction}

The Gabor wave front set of tempered distributions was introduced by H\"ormander 1991 \cite{PWHormander1}.
The idea was to measure singularities of tempered distributions both in terms of smoothness and decay at infinity comprehensively. 
Using the short-time Fourier transform the Gabor wave front set can be described as the directions in phase space $T^* \PWrr d$ where a distribution $u \in \PWcS'(\PWrr d)$ does not decay rapidly. 
The Gabor wave front set of $u \in \PWcS'(\PWrr d)$ is empty exactly if $u \in \PWcS(\PWrr d)$. 

The Gabor wave front set behaves differently than the classical $C^\infty$ wave front set, also introduced by H\"ormander 1971 (see \cite[Chapter~8]{PWHormander0}). 
It is for example translation invariant. 
The Gabor wave front set is adapted to the Shubin calculus of pseudodifferential operators \cite{PWShubin1} where symbols have isotropic behavior in phase space. 
With respect to this calculus, the corresponding notion of characteristic set, and the Gabor wave front set, pseudodifferential operators are microlocal and microelliptic, similar to pseudodifferential operators with H\"ormander symbols in their natural context. 

The main result of this note concerns the Gabor wave front set of compactly supported distributions. 
We show (see Corollary \ref{PWcor:WFGcompact}) that for $u \in \PWcE'(\PWrr d)$ we have
\begin{equation}\label{PWeq:mainresult}
\PWWF_G (u) = \{ 0 \} \times \pi_2 \PWWF(u) 
\end{equation} 
where $\PWWF_G$ denotes the Gabor wave front set, $\PWWF$ denotes the classical wave front set, 
and $\pi_2$ is the projection on the covariable (second) phase space $\PWrr d$ coordinate. 
By \cite[Theorem~8.1.3]{PWHormander0}, $\pi_2 \PWWF(u) = \Sigma(u)$.
The symbol $\Sigma(u)$ denotes the cone complement of the 
space of directions in $\PWrr d$ in an open conic neighborhood of which
the Fourier transform of $u \in \PWcE'(\PWrr d)$ decays rapidly. 

The equality \eqref{PWeq:mainresult} thus describes exactly the Gabor wave front set of $u \in \PWcE'(\PWrr d)$ in terms of known ingredients in terms of $\PWWF(u)$: The space coordinate is zero and the frequency directions are exactly the ``irregular'' frequency directions of $u$. 

In the literature there are several concepts of global wave front sets apart from the Gabor wave front set. 
There is a parametrized version \cite{PWWahlberg1} and an Gelfand--Shilov version \cite{PWCarypis1} of the same idea. 
Melrose \cite{PWMelrose1} introduced the scattering wave front set which was used by Coriasco and Maniccia \cite{PWCoriasco1} for propagation of singularities, cf. also \cite{PWCoriasco2}. 
Cappiello \cite{PWCappiello1} has studied the corresponding concept in a Gelfand--Shilov framework. 

The paper is organized as follows. Section \ref{PWsec:preliminaries} contains notation and background, 
in Section \ref{PWsec:Gaborclassical} we prove the main results, and finally in 
Section \ref{PWsec:propsing} we discuss how the results from Section \ref{PWsec:Gaborclassical} can be applied to propagation of singularities for certain Schr\"odinger type evolution equations. 

\section{Preliminaries}
\label{PWsec:preliminaries}

An open ball of radius $r>0$ and center at the origin is denoted $B_r$.
The unit sphere in $\PWrr d$ is denoted $S_{d-1}$.
We write $f (x) \lesssim g (x)$ provided there exists $C>0$ such that $f (x) \leqslant C g(x)$ for all $x$ in the domain of $f$ and $g$. The Japanese bracket on $\PWrr d$ is defined by $\langle x \rangle = \sqrt{1+|x|^2}$. Peetre's inequality is
\begin{equation}\label{PWeq:peetre}
\PWeabs{x+y}^{s}\lesssim \PWeabs{x}^{s} \PWeabs{y}^{|s|}, \quad s \in \PWro. 
\end{equation}

The Fourier transform on the Schwartz space $\PWcS(\PWrr d)$ is normalized as
\begin{equation*}
\PWcF f(\xi) = \widehat f (\xi) = \int_{\PWrr d} f(x) e^{- i \langle x, \xi \rangle} d x, \quad \xi \in \PWrr d, \quad f \in \PWcS(\PWrr d),
\end{equation*}
where $\langle \cdot, \cdot \rangle$ is the inner product on $\PWrr d$, 
and extended to its dual, the tempered distributions $\PWcS'(\PWrr d)$. 
The inner product $(\cdot,\cdot)$ on $L^2 (\PWrr d) \times L^2(\PWrr d)$ is conjugate linear in the second argument. 
We use this notation also for the conjugate linear action of $\PWcS'(\PWrr d)$ on $\PWcS(\PWrr d)$.

Some notions of time-frequency analysis and pseudodifferential operators on $\PWrr d$ are recalled \cite{PWFolland1,PWGrochenig1,PWHormander0,PWLerner1,PWNicola1,PWShubin1}.
Let $u \in \PWcS'(\PWrr d)$ and $\psi \in \PWcS(\PWrr d) \setminus \{ 0 \}$. The \emph{short-time Fourier transform} (STFT) $V_\psi u $ of $u$ with respect to the window function $\psi$, is defined as
\begin{equation*}
V_\psi u :\ \PWrr {2d} \rightarrow \PWco, \quad z \mapsto V_\psi u(x,\xi) = (u, \Pi(z) \psi ),
\end{equation*}
where $\Pi(z)=M_\xi T_x$, $z=(x,\xi) \in \PWrr {2d}$ is the time-frequency shift composed of the translation operator $T_x \psi(y)=\psi(y-x)$ and the modulation operator $M_\xi \psi(y)=e^{i \langle y , \xi \rangle} \psi(y)$.

We have $V_\psi u \in C^\infty(\PWrr {2d})$ and by \cite[Theorem~11.2.3]{PWGrochenig1} there exists $m \geqslant 0$ such that 
\begin{equation}\label{PWeq:STFTbound}
|V_\psi u(z)| \lesssim \PWeabs{z}^m, \quad z \in \PWrr {2d}.
\end{equation}
The order of growth $m$ does not depend on $\psi \in \PWcS(\PWrr d) \setminus 0$. 
Let $\psi \in \PWcS(\PWrr d)$ satisfy $\| \psi \|_{L^2}=1$.
The Moyal identity
\begin{equation*}
(u,g)= (2\pi)^{-d} \int_{\PWrr {2d}} V_\psi u(z) \, \overline{V_\psi g(z)} \, d z, \quad g \in \PWcS(\PWrr d), \quad u \in \PWcS'(\PWrr d),
\end{equation*}
is sometimes written
\begin{equation}\label{PWeq:moyal}
u = (2\pi)^{-d} \int_{\PWrr {2d}} V_\psi u(x,\xi) \, M_\xi T_x \psi \, d x \, d \xi, \quad u \in \PWcS'(\PWrr d),
\end{equation}
with action understood to take place under the integral. In this form it is an inversion formula for the STFT.

Let $a \in C^\infty (\PWrr {2d})$ and $m \in \PWro$. Then $a$ is a \emph{Shubin symbol} of order $m$, denoted $a\in \Gamma^m$, if for all $\alpha,\beta \in \PWnn d$ there exists a constant $C_{\alpha\beta}>0$ such that
\begin{equation}\label{PWeq:shubinineq}
|\partial_x^\alpha \partial_\xi^\beta a(x,\xi)| \leqslant C_{\alpha\beta}\langle (x,\xi)\rangle^{m-|\alpha|-|\beta|}, \quad x,\xi \in \PWrr d.
\end{equation}
The Shubin symbols form a Fr\'echet space where the seminorms are given by the smallest possible constants in \eqref{PWeq:shubinineq}.

For $a \in \Gamma^m$ a pseudodifferential operator in the Weyl quantization is defined by
\begin{equation*}
a^w(x,D) u(x)
= (2\pi)^{-d}  \int_{\PWrr {2d}} e^{i \langle x-y, \xi \rangle} a \left(\frac{x+y}{2},\xi \right) \, u(y) \, d y \, d \xi, \quad u \in \PWcS(\PWrr d),
\end{equation*}
when $m<-d$. The definition extends to general $m \in \PWro$ if the integral is viewed as an oscillatory integral.
The operator $a^w(x,D)$ then acts continuously on $\PWcS(\PWrr d)$ and extends by duality to a continuous operator on $\PWcS'(\PWrr d)$.
By Schwartz's kernel theorem the Weyl quantization procedure may be extended to a weak formulation which yields operators $a^w(x,D):\PWcS(\PWrr{d}) \rightarrow \PWcS'(\PWrr{d})$, even if $a$ is only an element of $\PWcS'(\PWrr{2d})$.

The phase space $T^* \PWrr d$ is a symplectic vector space equipped with the 
canonical symplectic form
\begin{equation*}
\sigma((x,\xi), (x',\xi')) = \langle x' , \xi \rangle - \langle x, \xi' \rangle, \quad (x,\xi), (x',\xi') \in T^* \PWrr d.
\end{equation*}
The real symplectic group $\PWSp(d,\PWro)$ is the set of matrices in $\PWGL(2d,\PWro)$ that leaves $\sigma$ invariant. 
To each $\chi \in \PWSp(d,\PWro)$ is associated an operator $\mu(\chi)$ which is unitary on $L^2(\PWrr d)$, and determined up to a complex factor of modulus one, such that
\begin{equation*}
\mu(\chi)^{-1} a^w(x,D) \, \mu(\chi) = (a \circ \chi)^w(x,D), \quad a \in \PWcS'(\PWrr {2d})
\end{equation*}
(cf. \cite{PWFolland1,PWHormander0}).
The operators $\mu(\chi)$ are homeomorphisms on $\PWcS$ and on $\PWcS'$, 
and are called metaplectic operators.

The metaplectic representation is the mapping $\PWSp(d,\PWro) \ni \chi \mapsto \mu(\chi)$ which is a homomorphism modulo sign
\begin{equation*}
\mu(\chi_1) \mu(\chi_2) = \pm \mu(\chi_1 \chi_2), \quad \chi_1,\chi_2 \in \PWSp(d,\PWro). 
\end{equation*}
Two ways to overcome the sign ambiguity are to pass to a double-valued representation \cite{PWFolland1}, or to a representation of the so called two-fold covering group of $\PWSp(d,\PWro)$. 
The latter group is called the metaplectic group $\PWMp(d,\PWro)$. 
The two-to-one projection $\pi: \PWMp(d,\PWro) \rightarrow \PWSp(d,\PWro)$ is $\mu(\chi)\mapsto \chi$ whose kernel is $\{\pm 1\}$. The sign ambiguity may be fixed (hence it is possible to choose a section of $\pi$) along a continuous path $\PWro \ni t\mapsto \chi_t \in \PWSp(d,\PWro)$. 
This involves the so called Maslov factor \cite{PWLeray1}.

\section{The Gabor and the classical wave front sets}
\label{PWsec:Gaborclassical}

First we define the Gabor wave front set $\PWWF_G$ introduced in \cite{PWHormander1} and further elaborated in \cite{PWRodino1}. 

\begin{defn}\label{PWdef:Gaborwavefront}
Let $\varphi \in \PWcS(\PWrr d) \setminus 0$, $u \in \PWcS'(\PWrr d)$ and $z_0 \in T^* \PWrr d \setminus 0$. 
Then $z_0 \notin \PWWF_G(u)$ if there exists an open conic set $\Gamma \subseteq T^* \PWrr d \setminus 0$ such that $z_0 \in \Gamma$ and 
\begin{equation}\label{PWeq:conedecay}
\sup_{z \in \Gamma} \PWeabs{z}^N | V_\varphi u(z)| < \infty, \quad N \geqslant 0. 
\end{equation}
\end{defn}

This means that $V_\varphi u$ decays rapidly (super-polynomially) in $\Gamma$. 
The condition \eqref{PWeq:conedecay} is independent of $\varphi \in \PWcS(\PWrr d) \setminus 0$, in the sense that super-polynomial decay will hold also for $V_\psi u$ if $\psi \in \PWcS(\PWrr d) \setminus 0$, in a possibly smaller cone containing $z_0$. 
The Gabor wave front set is a closed conic subset of $T^*\PWrr d  \setminus 0$. 
By \cite[Proposition~2.2]{PWHormander1} it is symplectically invariant in the sense of 
\begin{equation}\label{PWeq:metaplecticWFG}
\PWWF_G( \mu(\chi) u) = \chi \PWWF_G(u), \quad \chi \in \PWSp(d, \PWro), \quad u \in \PWcS'(\PWrr d).
\end{equation}

The Gabor wave front set is naturally connected to the definition of the $C^\infty$ wave front set \cite[Chapter~8]{PWHormander0}, often called just the wave front set and denoted $\PWWF$. 
A point in the phase space $(x_0,\xi_0) \in T^* \PWrr d$ such that $\xi_0 \neq 0$ satisfies $(x_0,\xi_0) \notin \PWWF(u)$ 
if there exists $\varphi \in C_c^\infty(\PWrr d)$ such that $\varphi(0) \neq 0$, 
an open conical set $\Gamma_2 \subseteq \PWrr d \setminus 0$ such that $\xi_0 \in \Gamma_2$, and 
\begin{equation*}
\sup_{\xi \in \Gamma_2} \PWeabs{\xi}^N | V_\varphi u(x_0,\xi)| < \infty, \quad N \geqslant 0. 
\end{equation*}

The difference compared to $\PWWF_G(u)$ is that the $C^\infty$ wave front set $\PWWF(u)$ is defined in terms of super-polynomial decay in the frequency variable, for $x_0$ fixed, instead of super-polynomial decay in an open cone in the phase space $T^* \PWrr d$ containing the point of interest. 

Pseudodifferential operators with Shubin symbols are microlocal with respect to the Gabor wave front set. 
In fact we have by  \cite[Proposition~2.5]{PWHormander1} 
\begin{equation*}
\PWWF_G (a^w(x,D) u) \subseteq  \PWWF_G (u)
\end{equation*}
provided $a \in \Gamma^m$ and $u \in \PWcS'(\PWrr d)$. 

In the next result we relate the Gabor wave front set with the $C^\infty$ wave front set for a tempered distribution. 
We use the notation $\pi_2(x,\xi) = \xi$ for the projection $\pi_2: T^* \PWrr d \to \PWrr d$ onto the covariable. 

\begin{prop}\label{PWprop:WFGinclusion1}
If $u \in \PWcS'(\PWrr d)$ then 
\begin{equation*}
\{ 0 \} \times \pi_2 \PWWF(u) \subseteq \PWWF_G (u) . 
\end{equation*} 
\end{prop}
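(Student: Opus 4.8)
\emph{Reformulation.} The plan is to prove the contrapositive in the following form: if $(0,\xi_0)\notin\PWWF_G(u)$ with $\xi_0\neq 0$, then $\xi_0\notin\pi_2\PWWF(u)$, i.e.\ $(x_0,\xi_0)\notin\PWWF(u)$ for \emph{every} $x_0\in\PWrr d$. Establishing the conclusion for all $x_0$ simultaneously is exactly what upgrades a single spatial slice to a statement about the projection $\pi_2\PWWF(u)$, and it is the heart of the matter. Note that any $\xi_0\in\pi_2\PWWF(u)$ is automatically nonzero, so $(0,\xi_0)\in T^*\PWrr d\setminus 0$ is an admissible point of the Gabor wave front set.

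\emph{Choice of window.} I would fix once and for all a cutoff $\varphi_0\in C_c^\infty(\PWrr d)$ with $\varphi_0(0)\neq 0$; then $\varphi_0\in\PWcS(\PWrr d)\setminus 0$. Since $(0,\xi_0)\notin\PWWF_G(u)$, the window-independence of condition \eqref{PWeq:conedecay} recorded after \defnref{PWdef:Gaborwavefront} yields an open cone $\Gamma\subseteq T^*\PWrr d\setminus 0$ with $(0,\xi_0)\in\Gamma$ and
\begin{equation*}
\sup_{z\in\Gamma}\PWeabs{z}^N|V_{\varphi_0}u(z)|<\infty,\qquad N\geqslant 0.
\end{equation*}
Working with the compactly supported $\varphi_0$ from the outset is what lets me land directly in the $C_c^\infty$ window appearing in the definition of $\PWWF(u)$, so that no separate window-change lemma for the classical wave front set is needed.

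\emph{Geometric core.} The key observation is that, although $(0,\xi_0)$ has vanishing space component, for any fixed $x_0$ the ``vertical'' ray $\{(x_0,t\xi_0):t>0\}$ asymptotically points in the direction $(0,\xi_0)$, since $(x_0,\xi)/|(x_0,\xi)|\to(0,\xi/|\xi|)$ as $|\xi|\to\infty$. Concretely I would show: given $x_0$, because $\Gamma$ is open and conic, there exist an open cone $\Gamma_2\subseteq\PWrr d\setminus 0$ with $\xi_0\in\Gamma_2$ and a radius $R>0$ such that $(x_0,\xi)\in\Gamma$ whenever $\xi\in\Gamma_2$ and $|\xi|\geqslant R$. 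This is a routine estimate on the unit sphere using the triangle inequality $|(x_0,\xi)|\leqslant|x_0|+|\xi|$; the essential point is that $R$ is permitted to depend on $x_0$, which is harmless because the target cone $\Gamma_2$ remains anchored at $\xi_0$.

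\emph{Conclusion.} Finally I would restrict the cone decay to this slice. For $\xi\in\Gamma_2$ with $|\xi|\geqslant R$ we have $(x_0,\xi)\in\Gamma$ and $\PWeabs{(x_0,\xi)}\geqslant\PWeabs{\xi}$, whence
\begin{equation*}
\PWeabs{\xi}^N|V_{\varphi_0}u(x_0,\xi)|\leqslant\PWeabs{(x_0,\xi)}^N|V_{\varphi_0}u(x_0,\xi)|\leqslant\sup_{z\in\Gamma}\PWeabs{z}^N|V_{\varphi_0}u(z)|<\infty;
\end{equation*}
for $\xi\in\Gamma_2$ with $|\xi|<R$ the quantity is bounded by continuity of $V_{\varphi_0}u$ over the bounded region $\{\xi\in\Gamma_2:|\xi|<R\}$. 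Hence $\sup_{\xi\in\Gamma_2}\PWeabs{\xi}^N|V_{\varphi_0}u(x_0,\xi)|<\infty$ for all $N\geqslant 0$, which is precisely the condition, with the admissible window $\varphi_0\in C_c^\infty(\PWrr d)$, $\varphi_0(0)\neq 0$, witnessing $(x_0,\xi_0)\notin\PWWF(u)$. As $x_0\in\PWrr d$ was arbitrary, $\xi_0\notin\pi_2\PWWF(u)$, completing the contrapositive. I expect the only genuinely delicate step to be the geometric claim, and in particular keeping track of the fact that the radius $R$ varies with $x_0$ while the frequency cone $\Gamma_2$ stays fixed at $\xi_0$; everything else is bookkeeping with continuity and the bound \eqref{PWeq:STFTbound}.
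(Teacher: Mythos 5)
Your proposal is correct and follows essentially the same route as the paper: reduce to showing $(x_0,\xi_0)\notin\PWWF(u)$ for every $x_0$, fix a $C_c^\infty$ window via window-independence, prove that the slice $\{x_0\}\times\Gamma_2$ eventually lies in the Gabor cone $\Gamma$, and transfer the decay using $\PWeabs{\xi}\leqslant\PWeabs{(x_0,\xi)}$. The only difference is cosmetic: the paper establishes the cone-inclusion claim by a compactness/subsequence contradiction, whereas you sketch a direct quantitative estimate on the unit sphere (and you handle the bounded region $|\xi|<R$ by continuity, a point the paper leaves implicit).
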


\begin{proof}
Suppose $|\xi_0| = 1$ and $(0,\xi_0) \notin \PWWF_G(u)$. 
Let $\varphi \in C_c^\infty(\PWrr d)$ satisfy $\varphi(0) \neq 0$.  
There exists an open conic set $\Gamma \subseteq T^* \PWrr d \setminus 0$ such that $(0,\xi_0) \in \Gamma$ and 
\begin{equation*}
\sup_{(x,\xi) \in \Gamma} \langle (x,\xi) \rangle^N  |V_\varphi u(x,\xi)| < \infty, \quad N \geqslant 0. 
\end{equation*}
We have to show that $(x_0,\xi_0) \notin \PWWF(u)$ for all $x_0 \in \PWrr d$. 

Let $x_0 \in \PWrr d$.
Define for $\PWep>0$ the open conic set containing $\xi_0$
\begin{equation*}
\Gamma_{2,\PWep} = \left\{ \xi \in \PWrr d \setminus 0: \, \left| \frac{\xi}{|\xi|} - \xi_0 \right| < \PWep \right\} \subseteq \PWrr d \setminus 0. 
\end{equation*}
We claim that there exists $\PWep>0$ such that 
\begin{equation}\label{PWconeinclusion1}
(\{ x_0 \} \times \Gamma_{2,\PWep}) \setminus B_{1/\PWep} \subseteq \Gamma
\end{equation}
holds. 
To prove this we assume for a contradiction that there exists $\xi_n \in \Gamma_{2,1/n}$ such that $|(x_0,\xi_n)| \geqslant n$ and $(x_0,\xi_n) \notin \Gamma$ for all $n \in \PWno$. 
Since $\Gamma$ is conic we have $|\xi_n|^{-1} (x_0,\xi_n) \notin \Gamma$. 
The sequence  $\left( |\xi_n|^{-1} (x_0,\xi_n) \right)_{n} \subseteq \PWrr {2d}$ is bounded so we get for a subsequence 
\begin{equation*}
\left( \frac{x_0}{|\xi_{n_j}|},\frac{\xi_{n_j}}{|\xi_{n_j}|} \right) \rightarrow (x,\xi), \quad j \rightarrow \infty, 
\end{equation*}
where $(x,\xi) \notin \Gamma$ due to $\Gamma$ being open, and $|\xi|=1$.  
From $|\xi_n| \geqslant n - |x_0|$ we may conclude that $x=0$. 
From $\xi_n \in \Gamma_{2,1/n}$ for all $n \in \PWno$ we obtain $\xi=\xi_0$, which gives $(0,\xi_0) \notin \Gamma$. 
This is a contradiction which shows that \eqref{PWconeinclusion1} must hold for some $\PWep>0$. 

Thus we have for $N \geqslant 0$ arbitrary
\begin{align*}
\sup_{\xi \in \Gamma_{2,\PWep}, \ |\xi| \geqslant \PWep^{-1} + |x_0|} \langle \xi \rangle^N  |V_\varphi u(x_0,\xi)| 
& \leqslant \sup_{\xi \in \Gamma_{2,\PWep}, \ |\xi| \geqslant \PWep^{-1} + |x_0|} \langle (x_0,\xi) \rangle^N  |V_\varphi u(x_0,\xi)| \\
& \leqslant \sup_{(x,\xi) \in \Gamma} \langle (x,\xi) \rangle^N  |V_\varphi u(x,\xi)| 
< \infty
\end{align*} 
which shows that $(x_0,\xi_0) \notin \PWWF(u)$. 
\end{proof}

\begin{prop}\label{PWprop:WFGinclusion2}
If $u \in \PWcE'(\PWrr d) + \PWcS(\PWrr d)$ then 
\begin{equation}\label{PWeq:WFGinclusion1}
\PWWF_G (u) \subseteq \{ 0 \} \times \pi_2 \PWWF(u). 
\end{equation} 
\end{prop}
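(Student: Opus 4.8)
The plan is to reduce to the case $u\in\PWcE'(\PWrr d)$ and then show that every point of $T^*\PWrr d\setminus 0$ lying outside $\{0\}\times\pi_2\PWWF(u)$ is absent from $\PWWF_G(u)$, treating the directions with nonzero space component and those with zero space component separately. First I would strip off the Schwartz part: if $u=u_0+g$ with $u_0\in\PWcE'(\PWrr d)$ and $g\in\PWcS(\PWrr d)$, then $V_\varphi u=V_\varphi u_0+V_\varphi g$ and $V_\varphi g$ decays super-polynomially on all of $\PWrr{2d}$, so the cone condition \eqref{PWeq:conedecay} holds for $u$ exactly when it holds for $u_0$; hence $\PWWF_G(u)=\PWWF_G(u_0)$, and likewise $\PWWF(u)=\PWWF(u_0)$ because $g$ is smooth. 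Thus I may assume $u\in\PWcE'(\PWrr d)$ with $\operatorname{supp}u\subseteq B_R$, and by the window independence of \eqref{PWeq:conedecay} I fix $\varphi\in C_c^\infty(\PWrr d)$ with $\operatorname{supp}\varphi\subseteq B_r$. The crucial elementary remark is that $V_\varphi u(x,\xi)=(u,M_\xi T_x\varphi)$ vanishes as soon as $(x+\operatorname{supp}\varphi)\cap\operatorname{supp}u=\emptyset$, that is, whenever $|x|>R+r$.

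This remark disposes at once of every direction with nonzero space component. Given $z_0=(x_0,\xi_0)$ with $x_0\neq0$, I choose an open conic neighbourhood $\Gamma$ of $z_0$ so narrow that $|x|\geqslant c\,|(x,\xi)|$ for some $c>0$ and all $(x,\xi)\in\Gamma$; this is possible since $|x_0|>0$. Then $V_\varphi u$ vanishes identically on $\Gamma\cap\{|(x,\xi)|>(R+r)/c\}$, so \eqref{PWeq:conedecay} holds trivially and $z_0\notin\PWWF_G(u)$. This already yields $\PWWF_G(u)\subseteq\{0\}\times(\PWrr d\setminus0)$, and it is here that compact support is used essentially (for $\xi_0=0$ the frequency estimate below would give nothing).

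It remains to treat a point $(0,\xi_0)$ with $\xi_0\neq0$ and $\xi_0\notin\pi_2\PWWF(u)=\Sigma(u)$. A Parseval computation, i.e. the covariance of the STFT under the Fourier transform (see \cite{PWGrochenig1}), furnishes the \emph{$x$-independent} bound
\[
|V_\varphi u(x,\xi)|\lesssim\int_{\PWrr d}|\widehat u(\eta)|\,|\widehat\varphi(\eta-\xi)|\,d\eta=:F(\xi),
\]
the $x$-dependence having dropped out through a unimodular factor. By the Paley--Wiener--Schwartz theorem $\widehat u$ is smooth with $|\widehat u(\eta)|\lesssim\PWeabs\eta^{N_0}$ for some $N_0$, while $\xi_0\notin\Sigma(u)$ means that $\widehat u$ decays super-polynomially in an open cone $\Gamma_2\ni\xi_0$. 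I would then show that $F$ decays rapidly on a smaller cone $\Gamma_2'$ with $\overline{\Gamma_2'}\setminus0\subseteq\Gamma_2$, by splitting the $\eta$-integral according to whether $|\eta-\xi|$ is smaller or larger than a fixed fraction of $|\xi|$: on the near region $\eta$ remains in $\Gamma_2$ with $\PWeabs\eta\approx\PWeabs\xi$, so the rapid decay of $\widehat u$ dominates, whereas on the far region $\PWeabs{\eta-\xi}\gtrsim\PWeabs\xi$ and Peetre's inequality \eqref{PWeq:peetre} converts the polynomial growth of $\widehat u$ into an arbitrarily high negative power of $\PWeabs\xi$. Finally, choosing an open conic neighbourhood $\Gamma\ni(0,\xi_0)$ in $T^*\PWrr d$ narrow enough that $(x,\xi)\in\Gamma$ forces $\xi\in\Gamma_2'$ and $\PWeabs{(x,\xi)}\lesssim\PWeabs\xi$, the rapid decay of $F$ gives \eqref{PWeq:conedecay}, so $(0,\xi_0)\notin\PWWF_G(u)$. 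Together with the preceding paragraph this proves \eqref{PWeq:WFGinclusion1}.

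The main obstacle is the cone-splitting estimate for $F$: it is the quantitative heart of the argument, transferring the rapid decay of $\widehat u$ in the frequency cone $\Gamma_2$ to rapid decay of the convolution $F$ on the slightly narrower cone $\Gamma_2'$, despite the polynomial growth of $\widehat u$ in the complementary directions. This is exactly the mechanism underlying \cite[Theorem~8.1.3]{PWHormander0}, and the delicate points are the matching choices of the apertures of $\Gamma_2'$ and $\Gamma$ and of the splitting radius; by contrast, the reduction to $\PWcE'(\PWrr d)$ and the vanishing of $V_\varphi u$ for large $|x|$ are routine.
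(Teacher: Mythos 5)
Your proposal is correct and follows essentially the same route as the paper: the same reduction to $u \in \PWcE'(\PWrr d)$, the same use of the vanishing/boundedness of $V_\varphi u$ in $x$ to handle points with $x_0 \neq 0$, and the same $x$-independent convolution bound $|V_\varphi u(x,\xi)| \lesssim |\widehat u| * |\widehat\varphi(-\cdot)|(\xi)$ with the identical cone-splitting of the integral (rapid decay of $\widehat u$ on the near region via $\Sigma(u)$, Paley--Wiener--Schwartz plus Peetre on the far region). The only differences are cosmetic: you justify the reduction modulo $\PWcS(\PWrr d)$ explicitly, which the paper merely asserts.
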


\begin{proof}
We may assume $u \in \PWcE'(\PWrr d)$. 
We start with the less precise inclusion 
\begin{equation}\label{PWeq:subinclusion2}
\PWWF_G (u) \subseteq \{ 0 \} \times \PWrr d \setminus 0. 
\end{equation} 
Suppose $0 \neq (x_0,\xi_0) \notin \{ 0 \} \times \PWrr d \setminus 0$. Then $x_0 \neq 0$ so for some $C>0$ we have $(x_0,\xi_0) \in \Gamma = \{ (x,\xi) \in T^* \PWrr d \setminus 0: \, |\xi| < C |x| \} \subseteq T^* \PWrr d \setminus 0$ which is an open conic set. 
If we pick $\varphi \in C_c^\infty(\PWrr d)$ we have $V_\varphi u(x,\xi) = 0$ if $|x| \geqslant A$ for $A>0$ sufficiently large due to $u \in \PWcE'(\PWrr d)$. 

Since we have the bound 
\eqref{PWeq:STFTbound} for some $m \geqslant 0$, 
we obtain for any $N \geqslant 0$
\begin{equation}\label{eq:STFTestimate1}
\begin{aligned}
\sup_{(x,\xi) \in \Gamma} \PWeabs{(x,\xi)}^N  |V_\varphi u(x,\xi)| 
& = \sup_{(x,\xi) \in \Gamma, \ |x| \leqslant A} \PWeabs{(x,\xi)}^N  |V_\varphi u (x,\xi)| \\
& \lesssim \sup_{(x,\xi) \in \Gamma, \ |x| \leqslant A} \PWeabs{x}^{N+m} < \infty. 
\end{aligned} 
\end{equation} 
It follows that $(x_0,\xi_0) \notin \PWWF_G(u)$, which proves the inclusion \eqref{PWeq:subinclusion2}. 

In order to show the sharper inclusion \eqref{PWeq:WFGinclusion1}, suppose $0 \neq (x_0,\xi_0) \notin \{ 0 \} \times \pi_2 \PWWF(u)$. 
Then either $x_0 \neq 0$ or $\xi_0 \notin \pi_2 \PWWF(u)$. If $x_0 \neq 0$ then by \eqref{PWeq:subinclusion2} we have $(x_0,\xi_0) \notin \PWWF_G(u)$. 
Therefore we may assume that $x_0=0$ and $\xi_0 \notin \pi_2 \PWWF(u)$, and our goal is to show $(0,\xi_0) \notin \PWWF_G(u)$, which will prove \eqref{PWeq:WFGinclusion1}. 

By \cite[Proposition~8.1.3]{PWHormander0} we have $\pi_2 \PWWF(u) = \Sigma(u)$, where $\Sigma(u) \subseteq \PWrr d \setminus 0$ is a closed conic set defined as follows.
A point $\eta \in \PWrr d \setminus 0$ satisfies $\eta \notin \Sigma(u)$ if $\eta \in \Gamma_2$ where $\Gamma_2 \subseteq \PWrr d \setminus 0$ is open and conic, and 
\begin{equation}\label{PWeq:frequencydecay1}
\sup_{\xi \in \Gamma_2} \PWeabs{\xi}^N |\widehat u(\xi)| < \infty, \quad N \geqslant 0. 
\end{equation} 

Thus we have $\xi_0 \notin \Sigma(u)$, so there exists an open conic set $\Gamma_2 \subseteq \PWrr d \setminus 0$ such that $\xi_0 \in \Gamma_2$, and \eqref{PWeq:frequencydecay1} holds. 
Let $\Gamma_2' \subseteq \PWrr d \setminus 0$ be an open conic set such that $\xi_0 \in \Gamma_2'$ and $\overline{\Gamma_2' \cap S_{d-1}} \subseteq \Gamma_2$. 

We have 
\begin{equation*}
V_\varphi u (x,\xi) = \widehat{u T_x \overline{\varphi}} (\xi)
= (2 \pi)^{-d} \widehat{u} * \widehat{T_x \overline{\varphi}} (\xi)
\end{equation*} 
which gives
\begin{equation}\label{PWeq:STFTconvolution1}
|V_\varphi u (x,\xi)| 
\lesssim |\widehat{u}| * |g| (\xi), \quad x, \ \xi \in \PWrr d, 
\end{equation} 
where $g (\xi)= \widehat \varphi(-\xi) \in \PWcS(\PWrr d)$.  
By the Paley--Wiener--Schwartz theorem we have for some $m \geqslant 0$
\begin{equation}\label{PWeq:PWS}
|\widehat{u} (\xi)| \lesssim \PWeabs{\xi}^m, \quad \xi \in \PWrr d. 
\end{equation} 
Define the open conic set
\begin{equation*}
\Gamma = \{ (x,\xi) \in T^* \PWrr d \setminus 0: \, |x| < |\xi|, \, \xi \in \Gamma_2' \}. 
\end{equation*} 
Then $(0,\xi_0) \in \Gamma$. 
To prove $(0,\xi_0) \notin \PWWF_G(u)$ it thus suffices to show 
\begin{equation*}
\sup_{(x,\xi) \in \Gamma} \PWeabs{(x,\xi)}^N |V_\varphi u (x,\xi)| <  \infty, \quad N \geqslant 0.   
\end{equation*} 

In turn, these estimates will by \eqref{PWeq:STFTconvolution1} follow from the estimates
\begin{equation}\label{PWeq:FTconvolution1}
\sup_{\xi \in \Gamma_2'} \PWeabs{\xi}^N (|\widehat{u}| * |g|) (\xi) <  \infty, \quad N \geqslant 0.   
\end{equation} 
Let $\PWep>0$ and split the convolution integral as 
\begin{equation*}
|\widehat{u}| * |g| (\xi) = \int_{\PWeabs{\eta} \leqslant \PWep \PWeabs{\xi} } |\widehat{u} (\xi-\eta)| \, |g(\eta)| \, d \eta
+ \int_{\PWeabs{\eta} > \PWep \PWeabs{\xi} } |\widehat{u} (\xi-\eta)| \, |g(\eta)| \, d \eta. 
\end{equation*} 

If $\xi \in \Gamma_2'$, $|\xi| \geqslant 1$ and $\PWeabs{\eta} \leqslant \PWep \PWeabs{\xi}$ for $\PWep>0$ sufficiently small, then $\xi-\eta \in \Gamma_2$. 
Using \eqref{PWeq:peetre} and \eqref{PWeq:frequencydecay1} we thus obtain if $\xi \in \Gamma_2'$ and $|\xi| \geqslant 1$ for any $N \geqslant 0$
\begin{equation}\label{PWeq:integralestimate1}
\begin{aligned}
\int_{\PWeabs{\eta} \leqslant \PWep \PWeabs{\xi} } |\widehat{u} (\xi-\eta)| \, |g(\eta)| \, d \eta
& \lesssim \int_{\PWeabs{\eta} \leqslant \PWep \PWeabs{\xi} } \PWeabs{\xi-\eta}^{-N} \, |g(\eta)| \, d \eta \\
& \lesssim \PWeabs{\xi}^{-N} \int_{\PWrr d} \PWeabs{\eta}^{N} \, |g(\eta)| \, d \eta \\
& \lesssim \PWeabs{\xi}^{-N} 
\end{aligned}
\end{equation}
since $g \in \PWcS(\PWrr d)$. 

The remaining integral we estimate using \eqref{PWeq:PWS}. 
We thus have for any $\xi \in \PWrr d$ and any $N \geqslant 0$
\begin{equation}\label{PWeq:integralestimate2}
\begin{aligned}
\int_{\PWeabs{\eta} > \PWep \PWeabs{\xi} } |\widehat{u} (\xi-\eta)| \, |g(\eta)| \, d \eta
& \lesssim \int_{\PWeabs{\eta} > \PWep \PWeabs{\xi} } \PWeabs{\xi-\eta}^{m} \, |g(\eta)| \, d \eta \\
& \lesssim \PWeabs{\xi}^{-N} \int_{\PWeabs{\eta} > \PWep \PWeabs{\xi} } \PWeabs{\xi}^{m+N} \PWeabs{\eta}^{m} \, |g(\eta)| \, d \eta \\
& \lesssim \PWeabs{\xi}^{-N} \int_{\PWeabs{\eta} > \PWep \PWeabs{\xi} } \PWeabs{\eta}^{2 m+N} \, |g(\eta)| \, d \eta \\
& \lesssim \PWeabs{\xi}^{-N}. 
\end{aligned}
\end{equation}
Combining \eqref{PWeq:integralestimate1} and \eqref{PWeq:integralestimate2} shows that the estimates \eqref{PWeq:FTconvolution1} hold, which as earlier explained proves $(0,\xi_0) \notin \PWWF_G(u)$. 
This proves the inclusion \eqref{PWeq:WFGinclusion1}. 
\end{proof}

\begin{cor}\label{PWcor:WFGcompact}
If $u \in \PWcE'(\PWrr d) + \PWcS(\PWrr d)$ then 
\begin{equation*}
\PWWF_G (u) = \{ 0 \} \times \pi_2 \PWWF(u) = \{ 0 \} \times \Sigma(u). 
\end{equation*} 
\end{cor}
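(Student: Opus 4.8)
The plan is to read off the corollary directly from the two preceding propositions: the claimed equality of sets is nothing but the conjunction of the two inclusions already established, together with the identification of $\pi_2 \PWWF(u)$ with $\Sigma(u)$. No genuinely new estimate is needed; the task is merely to verify that the hypotheses line up with the stated class $\PWcE'(\PWrr d) + \PWcS(\PWrr d)$.

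For the first equality I would proceed as follows. Since $\PWcE'(\PWrr d) + \PWcS(\PWrr d) \subseteq \PWcS'(\PWrr d)$, Proposition~\ref{PWprop:WFGinclusion1} applies to $u$ and yields $\{ 0 \} \times \pi_2 \PWWF(u) \subseteq \PWWF_G(u)$. The reverse inclusion $\PWWF_G(u) \subseteq \{ 0 \} \times \pi_2 \PWWF(u)$ is exactly the statement of Proposition~\ref{PWprop:WFGinclusion2}, whose hypothesis is precisely $u \in \PWcE'(\PWrr d) + \PWcS(\PWrr d)$. Combining the two inclusions gives $\PWWF_G(u) = \{ 0 \} \times \pi_2 \PWWF(u)$.

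For the second equality I would invoke \cite[Proposition~8.1.3]{PWHormander0}, which asserts $\pi_2 \PWWF(u) = \Sigma(u)$; taking the product with the fixed space coordinate $\{ 0 \}$ then turns this into $\{ 0 \} \times \pi_2 \PWWF(u) = \{ 0 \} \times \Sigma(u)$. To interpret $\Sigma(u)$, defined through the frequency decay \eqref{PWeq:frequencydecay1} of $\widehat u$, for the whole class $\PWcE'(\PWrr d) + \PWcS(\PWrr d)$, I would reduce to $u \in \PWcE'(\PWrr d)$ exactly as at the start of the proof of Proposition~\ref{PWprop:WFGinclusion2}: writing $u = u_0 + \psi$ with $u_0 \in \PWcE'(\PWrr d)$ and $\psi \in \PWcS(\PWrr d)$, linearity of the STFT and of the Fourier transform shows that the Schwartz summand $\psi$ affects none of the three sets, so that $\PWWF_G(u) = \PWWF_G(u_0)$, $\PWWF(u) = \PWWF(u_0)$ and $\Sigma(u) = \Sigma(u_0)$.

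The closest thing to an obstacle is this bookkeeping across the sum class: Proposition~\ref{PWprop:WFGinclusion1} is stated for all of $\PWcS'(\PWrr d)$, whereas $\Sigma(u)$ is most naturally attached to compactly supported distributions via Paley--Wiener--Schwartz. Once the reduction to $\PWcE'(\PWrr d)$ has absorbed the Schwartz part, everything matches and the corollary follows with no further computation.
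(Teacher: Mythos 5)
Your proposal is correct and matches the paper's intent exactly: the corollary is stated without a separate proof precisely because it is the immediate conjunction of Propositions~\ref{PWprop:WFGinclusion1} and~\ref{PWprop:WFGinclusion2} together with H\"ormander's identity $\pi_2 \PWWF(u) = \Sigma(u)$, and your reduction of the Schwartz summand is the same routine bookkeeping the paper leaves implicit.
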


The next result is a consequence of \cite[Proposition~2.7]{PWHormander1}. 
We include a proof here in order to give a self-contained account, and also in order to show an alternative proof technique. 

\begin{prop}\label{PWprop:smooth}
If $u \in \PWcS'(\PWrr d)$ and $\PWWF_G(u) \cap ( \{ 0 \} \times \PWrr d) = \emptyset$ then $u \in C^\infty(\PWrr d)$ and there exists $L \geqslant 0$ such that 
\begin{equation}\label{PWeq:polynomialbound1}
|\partial^\alpha u(x)| \leqslant C_\alpha \PWeabs{x}^{L + |\alpha|}, \quad x \in \PWrr d, \quad \alpha \in \PWnn d, \quad C_\alpha>0. 
\end{equation}
\end{prop}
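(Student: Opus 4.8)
The plan is to represent $u$ explicitly through the STFT inversion formula \eqref{PWeq:moyal}, differentiate under the integral sign, and estimate the result by splitting the phase space into the cone where $V_\varphi u$ decays rapidly and its complement. Fix once and for all a window $\varphi \in \PWcS(\PWrr d)$ with $\| \varphi \|_{L^2} = 1$; since the decay condition \eqref{PWeq:conedecay} is independent of the window, the hypothesis is available for this $\varphi$. First I would turn the hypothesis into a uniform statement by a compactness argument. For each $\xi_0 \in S_{d-1}$ the point $(0,\xi_0) \notin \PWWF_G(u)$, so there is an open cone containing it on which $V_\varphi u$ decays super-polynomially. As $\{ 0 \} \times S_{d-1}$ is compact, finitely many such cones cover it, and their union is an open conic neighborhood of $\{ 0 \} \times (\PWrr d \setminus 0)$ on which $V_\varphi u$ decays rapidly with uniform constants. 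A compactness argument of the same type as in the proof of Proposition \ref{PWprop:WFGinclusion1} then shows this neighborhood contains a set $\Gamma = \{ (x,\xi): |x| < c|\xi| \}$ for some $c>0$. Thus for every $N \geqslant 0$ one has $|V_\varphi u(x,\xi)| \lesssim \PWeabs{(x,\xi)}^{-N}$ when $|x| < c|\xi|$, while everywhere the global bound \eqref{PWeq:STFTbound} $|V_\varphi u(x,\xi)| \lesssim \PWeabs{(x,\xi)}^m$ holds.

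Next I would insert these bounds into the inversion formula. Writing the action of \eqref{PWeq:moyal} at the point $x$,
\begin{equation*}
u(x) = (2\pi)^{-d} \int_{\PWrr {2d}} V_\varphi u(y,\eta) \, e^{i \langle x, \eta \rangle} \varphi(x-y) \, d y \, d \eta,
\end{equation*}
the Leibniz rule expresses $\partial^\alpha u(x)$ as a finite sum of integrals of $V_\varphi u(y,\eta) \, \eta^\beta \, e^{i \langle x,\eta\rangle} (\partial^\gamma \varphi)(x-y)$ over $\beta+\gamma=\alpha$. I would estimate each integral over the two regions. On $\Gamma$ (where $|y| < c|\eta|$) the rapid decay, together with $|\eta^\beta| \leqslant \PWeabs{(y,\eta)}^{|\beta|}$ and a large choice of $N$, makes the integrand integrable and the contribution bounded uniformly in $x$. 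On the complement $|y| \geqslant c|\eta|$ only \eqref{PWeq:STFTbound} is available; the decisive point is that here $|\eta| \leqslant |y|/c$, so $\PWeabs{(y,\eta)} \lesssim \PWeabs{y}$ and $|\eta^\beta| \lesssim \PWeabs{y}^{|\beta|}$, while the $\eta$-integral over the ball $|\eta| \leqslant |y|/c$ costs only a factor $\PWeabs{y}^d$. The remaining $y$-integral of $\PWeabs{y}^{m+|\beta|+d}\,|(\partial^\gamma \varphi)(x-y)|$ is controlled by Peetre's inequality \eqref{PWeq:peetre} and the Schwartz decay of $\partial^\gamma \varphi$, giving $\lesssim \PWeabs{x}^{m+|\beta|+d}$. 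Setting $L = m+d$ and maximizing over $|\beta| \leqslant |\alpha|$ yields \eqref{PWeq:polynomialbound1}.

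Finally, the very same estimates, carried out on a compact set $|x| \leqslant R$, produce a dominating function that is integrable in $(y,\eta)$ and independent of $x$; this justifies differentiation under the integral sign to all orders and shows that the integral defines a $C^\infty$ function, which coincides with $u$ by \eqref{PWeq:moyal}. Hence $u \in C^\infty(\PWrr d)$ and satisfies the bound.

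The step I expect to be the main obstacle is the complementary region $|y| \geqslant c|\eta|$, where the STFT is only known to grow polynomially via \eqref{PWeq:STFTbound}. The key observation that rescues the estimate is that this region forces the frequency $\eta$ to be dominated by the space variable $y$, so the growth in $\eta$ is traded for growth in $y$, which is then absorbed by the rapid decay of the window $\varphi(x-y)$; the only cost is the polynomial factor $\PWeabs{x}^{L+|\alpha|}$ produced by Peetre's inequality.
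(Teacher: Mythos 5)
Your proposal is correct and follows essentially the same route as the paper's proof: write $\partial^\alpha u$ via the Moyal inversion formula \eqref{PWeq:moyal}, and split phase space into the cone around $\{0\}\times(\PWrr d\setminus 0)$ where $V_\varphi u$ decays rapidly and the complementary cone where the frequency is dominated by the space variable, so that the polynomial STFT bound \eqref{PWeq:STFTbound} together with Peetre's inequality and the Schwartz decay of the window yields the $\PWeabs{x}^{L+|\alpha|}$ estimate. The only difference is cosmetic: you spell out the compactness argument producing the uniform cone of rapid decay, which the paper asserts directly from the hypothesis.
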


\begin{proof}
From the assumptions it follows that for some $C>0$ we have 
\begin{equation*}
\PWWF_G(u) \subseteq \Gamma := \{ (x,\xi) \in T^* \PWrr d \setminus 0: \, |\xi| < C |x| \}. 
\end{equation*}
Let $\psi \in \PWcS(\PWrr d)$ satisfy $\| \psi \|_{L^2} = 1$. 
We use the Moyal identity \eqref{PWeq:moyal}
and show that the integral for $\partial^\alpha u$ is absolutely convergent for any $\alpha \in \PWnn d$. 
Thus we write formally
\begin{equation}\label{PWeq:STFTreconstruction}
\partial^\alpha u (y) = (2\pi)^{-d} \sum_{\beta \leqslant \alpha} \binom{\alpha}{\beta} \int_{\PWrr {2d}} V_\psi u(x,\xi) \, (i\xi)^\beta e^{i \langle \xi,y \rangle} \partial^{\alpha-\beta} \psi(y-x) \, d x \, d \xi. 
\end{equation}

We split the integral in two parts. Since $\PWrr {2d} \setminus \Gamma \subseteq T^* \PWrr d$ is a closed conic set that does not intersect $\PWWF_G(u)$ 
we have for any $N \geqslant 0$ and any $k \geqslant 0$
\begin{equation}\label{PWeq:integralpiece1}
\begin{aligned}
& \left| \int_{\PWrr {2d} \setminus \Gamma} V_\psi u(x,\xi) \, (i\xi)^\beta e^{i \langle \xi,y \rangle} \partial^{\alpha-\beta} \psi(y-x) \, d x \, d \xi \right| \\
& \leqslant \int_{\PWrr {2d} \setminus \Gamma} |V_\psi u(x,\xi)| \, \PWeabs{\xi}^{|\alpha|} \, |\partial^{\alpha-\beta} \psi(y-x)| \, d x \, d \xi \\
& \lesssim \int_{\PWrr {2d} \setminus \Gamma} \PWeabs{(x,\xi)}^{-N} \, \PWeabs{\xi}^{|\alpha|} \, |\partial^{\alpha-\beta} \psi(y-x)| \, d x \, d \xi \\
& \lesssim \int_{\PWrr {2d} \setminus \Gamma} \PWeabs{\xi}^{|\alpha|-N} \, \PWeabs{x}^{N} \, \PWeabs{y-x}^{-k} \, d x \, d \xi \\
& \lesssim \PWeabs{y}^{k} \int_{\PWrr {2d}} \PWeabs{\xi}^{|\alpha|-N} \, \PWeabs{x}^{N-k} \, d x \, d \xi \\
& \lesssim \PWeabs{y}^{k} 
\end{aligned}
\end{equation}
provided $N > d + |\alpha|$ and $k > d + N. $

For the remaining part of the integral we use the estimate \eqref{PWeq:STFTbound}
for some $m \geqslant 0$. 
Using $|\xi| < C |x|$ when $(x,\xi) \in \Gamma$, this gives for any $k \geqslant 0$
\begin{equation}\label{PWeq:integralpiece2}
\begin{aligned}
& \left| \int_{\Gamma} V_\psi u(x,\xi) \, (i\xi)^\beta e^{i \langle \xi,y \rangle} \partial^{\alpha-\beta} \psi(y-x) \, d x \, d \xi \right| \\
& \lesssim \int_{|\xi| < C |x|} \PWeabs{(x,\xi)}^m \, \PWeabs{\xi}^{|\alpha|} \, |\partial^{\alpha-\beta} \psi(y-x)| \, d x \, d \xi \\
& \lesssim \int_{|\xi| < C |x|} \PWeabs{\xi}^{-d-1} \, \PWeabs{x}^{|\alpha|+m+d+1} \, |\partial^{\alpha-\beta} \psi(y-x)| \, d x \, d \xi \\
& \lesssim \int_{\PWrr {2d}} \PWeabs{\xi}^{-d-1} \, \PWeabs{x}^{|\alpha|+m+d+1} \, \PWeabs{y-x}^{-k} \, d x \, d \xi \\
& \lesssim \PWeabs{y}^{k} \int_{\PWrr {2d}} \PWeabs{\xi}^{-d-1} \, \PWeabs{x}^{|\alpha|+m+d+1-k} \, d x \, d \xi \\
& \lesssim \PWeabs{y}^{k} 
\end{aligned}
\end{equation}
provided $k > |\alpha|+m+2d+1$. 
Combining \eqref{PWeq:integralpiece1} and \eqref{PWeq:integralpiece2} shows in view of \eqref{PWeq:STFTreconstruction} that $u \in C^\infty(\PWrr d)$ and the estimate \eqref{PWeq:polynomialbound1} follows. 
\end{proof}

\section{Propagation of singularities for Schr\"odinger equations}
\label{PWsec:propsing}

In this section we discuss briefly the initial value Cauchy problem for a Schr\"odinger equation of the form
\begin{equation}\label{PWeq:schrodeq}
\left\{
\begin{array}{rl}
\partial_t u(t,x) + q^w(x,D) u (t,x) & = 0, \\
u(0,\cdot) & = u_0, 
\end{array}
\right.
\end{equation}
where $u_0 \in \PWcS'(\PWrr d)$, $t \geqslant 0$ and $x \in \PWrr d$.
The Weyl symbol of the Hamiltonian $q^w(x,D)$ is a quadratic form
\begin{equation*}
q(x,\xi) = \langle (x, \xi), Q (x, \xi) \rangle, \quad x, \, \xi \in \PWrr d, 
\end{equation*}
where $Q \in \PWcc {2d \times 2d}$ is a symmetric matrix with $\PWre Q \geqslant 0$. 

The \emph{Hamilton map} $F$ corresponding to $q$ is defined by 
$$
F = \PWJ Q \in \PWcc {2d \times 2d}
$$
where 
$$
\PWJ = 
\left( 
\begin{array}{rr}
0 \ & I \\
-I \ & 0
\end{array}
\right) \in \PWrr {2d \times 2d}
$$
is a symplectic matrix that is a cornerstone in symplectic linear algebra. 

The equation \eqref{PWeq:schrodeq} is solved for $t \geqslant 0$ by 
$$
u(t,x) = e^{-t q^w(x,D)} u_0(x)
$$
where the propagator $e^{-t q^w(x,D)}$ is defined in terms of semigroup theory \cite{PWHormander2,PWYosida1}. 

According to \cite[Theorem~6.2]{PWRodino2} the Gabor wave front set propagates as stated in the following result. 
Let $q$ be a quadratic form on $T^* \PWrr d$ defined by a symmetric matrix $Q \in \PWcc {2d \times 2d}$, $\PWre Q \geqslant 0$ and $F = \PWJ Q$. 
Then for $u \in \PWcS'(\PWrr d)$ and $t>0$
\begin{equation*}
\PWWF_G ( e^{-t q^w(x,D)} u ) \subseteq \left( \left( e^{2 t \PWim F} ( \PWWF_G (u) \cap S ) \right) \cap S \right) \setminus 0
\end{equation*}
where the \emph{singular space} is defined by 
\begin{equation}\label{singspace}
S=\Big(\bigcap_{j=0}^{2d-1} \PWKer\big[\PWre F(\PWim F)^j \big]\Big) \cap T^* \PWrr d \subseteq T^* \PWrr d. 
\end{equation}
Under the additional assumption on the Poisson bracket $\{q,\overline q\} = 0$, \cite[Corollary~6.3]{PWRodino2} says that 
$S = \PWKer (\PWre F )$ and hence
\begin{equation*}
\PWWF_G ( e^{-t q^w(x,D)} u ) \subseteq \left( \left( e^{2 t \PWim F} ( \PWWF_G (u) \cap \PWKer (\PWre F ) ) \right) \cap  \PWKer (\PWre F ) \right) \setminus 0
\end{equation*}
for $u \in \PWcS'(\PWrr d)$ and $t>0$. 

If we combine these results with Proposition \ref{PWprop:smooth} we get the following consequence.

\begin{cor}
Let $q$ be a quadratic form on $T^* \PWrr d$ defined by a symmetric matrix $Q \in \PWcc {2d \times 2d}$, $\PWre Q \geqslant 0$ and $F = \PWJ Q$. 
If  
\begin{equation*}
S \cap (\{0\} \times \PWrr d)= \{ 0 \}, 
\end{equation*}
which if $\{q,\overline q\} = 0$ reads 
\begin{equation*}
\PWKer (\PWre F ) \cap (\{0\} \times \PWrr d)= \{ 0 \}, 
\end{equation*}
then for $u \in \PWcS'(\PWrr d)$ and $t>0$ we have $e^{-t q^w(x,D)} u \in C^\infty(\PWrr d)$, and
\begin{equation*}
|\partial^\alpha e^{-t q^w(x,D)} u (x)| \lesssim \PWeabs{x}^{L_t + |\alpha|}, \quad x \in \PWrr d, \quad \alpha \in \PWnn d,  
\end{equation*}
for some $L_t \geqslant 0$.  
\end{cor}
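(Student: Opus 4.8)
The plan is to combine the propagation inclusion stated just above with Proposition \ref{PWprop:smooth}. Write $v = e^{-t q^w(x,D)} u$ for the solution at a fixed time $t > 0$. The propagation result gives
\begin{equation*}
\PWWF_G(v) \subseteq \left( \left( e^{2 t \PWim F}(\PWWF_G(u) \cap S) \right) \cap S \right) \setminus 0,
\end{equation*}
and the decisive observation is that the outer intersection with $S$ forces $\PWWF_G(v) \subseteq S$, regardless of what $\PWWF_G(u)$ happens to be.

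First I would record this inclusion $\PWWF_G(v) \subseteq S$ and intersect both sides with $\{0\} \times \PWrr d$. Since the Gabor wave front set is by definition a subset of $T^* \PWrr d \setminus 0$, we in fact have $\PWWF_G(v) \subseteq S \setminus 0$, whence
\begin{equation*}
\PWWF_G(v) \cap (\{0\} \times \PWrr d) \subseteq (S \setminus 0) \cap (\{0\} \times \PWrr d).
\end{equation*}
The hypothesis $S \cap (\{0\} \times \PWrr d) = \{0\}$ then says that the only candidate point in this intersection is the origin, which has already been excluded; therefore $\PWWF_G(v) \cap (\{0\} \times \PWrr d) = \emptyset$. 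When $\{q,\overline q\} = 0$ one invokes $S = \PWKer(\PWre F)$ from \cite[Corollary~6.3]{PWRodino2}, and the argument is verbatim with $S$ replaced by $\PWKer(\PWre F)$.

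With this intersection empty, Proposition \ref{PWprop:smooth} applies directly to $v \in \PWcS'(\PWrr d)$ and yields $v \in C^\infty(\PWrr d)$ together with the estimate \eqref{PWeq:polynomialbound1}, which is exactly the asserted bound with $L_t = L$.

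I do not expect a genuine obstacle here: the argument is a clean set-theoretic reduction. The only point requiring care is the interplay between the ``$\setminus 0$'' in the propagation inclusion and the fact that $0 \notin \PWWF_G(v)$ by construction, so that the single common point $\{0\}$ permitted by the hypothesis does not actually lie in $\PWWF_G(v)$. One should also observe that the dependence of $L_t$ on $t$ enters only through the growth order $m$ of the STFT of $v$ in \eqref{PWeq:STFTbound}, which may itself depend on $t$; this is harmless, since Proposition \ref{PWprop:smooth} produces a finite $L$ for each fixed tempered distribution, and we simply rename it $L_t$.
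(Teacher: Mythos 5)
Your argument is correct and is exactly the combination the paper intends: the outer intersection with $S$ in the propagation inclusion gives $\PWWF_G(e^{-tq^w(x,D)}u) \subseteq S \setminus 0$, the hypothesis then empties its intersection with $\{0\} \times \PWrr d$, and Proposition \ref{PWprop:smooth} finishes. The paper leaves this as an immediate consequence of the quoted results, so there is nothing to add.
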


Next we specialize to the Cauchy initial value problem for the harmonic oscillator Schr\"odinger equation 
\begin{equation}\label{PWeq:harmonicoscillator}
\left\{
\begin{array}{rl}
\partial_t u(t,x) + i(|x|^2- \Delta_x) u(t,x) & = 0, \qquad t \geqslant 0, \quad x \in \PWrr d, \\
u(0,\cdot) & = u_0 \in \PWcS'(\PWrr d).  
\end{array}
\right.
\end{equation}
This problem is a particular case of the general problem \eqref{PWeq:schrodeq} with $Q = i I_{2d}$. 
When $\PWre Q = 0$ the propagator is the unitary group $e^{- t q^w(x,D)} = \mu(e^{2 t \PWim F})$, $t \in \PWro$, on $L^2(\PWrr d)$ \cite{PWRodino2},  and
\begin{equation*}
\PWWF_G ( e^{- t q^w(x,D)} u_0 ) = e^{2 t \PWim F} \PWWF_G(u_0), \quad t \in \PWro, \quad u_0 \in \PWcS'(\PWrr d). 
\end{equation*}
The propagation is exact and time is reversible. 
This result is a consequence of the metaplectic representation and \eqref{PWeq:metaplecticWFG}. 
The quoted results \cite[Theorem~6.2 and Corollary~6.3]{PWRodino2} are not needed. 

For the equation \eqref{PWeq:harmonicoscillator} we thus have periodic propagation of the Gabor wave front set:
\begin{equation}\label{PWeq:propagationharmosc}
\PWWF_G(e^{- t q^w(x,D)} u_0) 
= e^{2 t \PWJ} \PWWF_G(u_0)
= \left(
\begin{array}{cc}
\cos(2t) I_d & \sin(2t) I_d \\
-\sin(2t) I_d & \cos(2t) I_d \\
\end{array}
\right) \PWWF_G(u_0), \quad t \in \PWro
\end{equation}
(cf. \cite[Example~7.5]{PWRodino2}). 

The following result gives a partial explanation, from the point of view of the Gabor wave front set, of Weinstein's
\cite{PWWeinstein1} and Zelditch's \cite{PWZelditch1} results on the propagation of the $C^\infty$ wave front set for the harmonic oscillator. 
The result says that a compactly supported initial datum will give a solution that is smooth except for a lattice on the time axis. 
At the points of the lattice the propagator is the identity or coordinate reflection $f (x) \mapsto f(- x)$,
which gives precise propagation of singularities by possible sign changes. 
Note that we do not allow as general potentials as in \cite{PWWeinstein1,PWZelditch1}. 

\begin{prop}
Consider the equation \eqref{PWeq:harmonicoscillator} and suppose $u_0 \in \PWcE'(\PWrr d) + \PWcS(\PWrr d)$. 
If $t \notin (\pi/2) \PWzo$ then $e^{- t q^w(x,D)} u_0 \in C^\infty(\PWrr d)$ and for some $L_t \geqslant 0$
\begin{equation}\label{PWeq:growthestimate1}
|\partial^\alpha e^{-t q^w(x,D)} u_0 (x)| \lesssim \PWeabs{x}^{L_t + |\alpha|}, \quad x \in \PWrr d, \quad \alpha \in \PWnn d.   
\end{equation}
If $t \in (\pi/2) \PWzo$ then 
\begin{align}
\PWWF_G( e^{- t q^w(x,D)} u_0 )  & = (-1)^{2t/\pi} \PWWF_G( u_0 ), \label{PWpropWFG1} \\
\PWWF( e^{- t q^w(x,D)} u_0 )  & = (-1)^{2t/\pi} \PWWF( u_0 ). \label{PWpropWFG2}
\end{align}
\end{prop}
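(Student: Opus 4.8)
The plan is to deduce both conclusions from the exact Gabor propagation formula \eqref{PWeq:propagationharmosc} together with the structural description of $\PWWF_G(u_0)$ furnished by Corollary \ref{PWcor:WFGcompact}. Since $u_0 \in \PWcE'(\PWrr d) + \PWcS(\PWrr d)$, that corollary gives $\PWWF_G(u_0) = \{0\} \times \pi_2 \PWWF(u_0) \subseteq \{0\} \times (\PWrr d \setminus 0)$, so every point of $\PWWF_G(u_0)$ has the form $(0,\xi)$ with $\xi \neq 0$. Applying the symplectic matrix $e^{2t\PWJ}$ from \eqref{PWeq:propagationharmosc} to such a point produces $(\sin(2t)\,\xi,\, \cos(2t)\,\xi)$, and since the propagation is an equality we conclude that $\PWWF_G(e^{-tq^w(x,D)} u_0)$ consists exactly of points of this form as $\xi$ ranges over $\pi_2 \PWWF(u_0)$.

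For the case $t \notin (\pi/2)\PWzo$ I would argue as follows. Here $\sin(2t) \neq 0$, so the space coordinate $\sin(2t)\,\xi$ is nonzero whenever $\xi \neq 0$; hence $\PWWF_G(e^{-tq^w(x,D)} u_0) \cap (\{0\} \times \PWrr d) = \emptyset$. Proposition \ref{PWprop:smooth} then immediately delivers $e^{-tq^w(x,D)} u_0 \in C^\infty(\PWrr d)$ together with the polynomial bound \eqref{PWeq:growthestimate1} for some $L_t \geqslant 0$.

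For the case $t \in (\pi/2)\PWzo$, set $k = 2t/\pi \in \PWzo$, so that $\sin(2t)=0$, $\cos(2t) = (-1)^k$, and therefore $e^{2t\PWJ} = (-1)^k I_{2d}$. The Gabor identity \eqref{PWpropWFG1} is then immediate from \eqref{PWeq:propagationharmosc}. For the classical statement \eqref{PWpropWFG2} I would make the propagator explicit: it equals $\mu((-1)^k I_{2d})$, which up to a unimodular phase is the identity when $k$ is even and the parity operator $P f(x) = f(-x)$ when $k$ is odd. In the even case the solution is a constant multiple of $u_0$, so $\PWWF(e^{-tq^w(x,D)} u_0) = \PWWF(u_0) = (-1)^k \PWWF(u_0)$. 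In the odd case it is a constant multiple of $u_0(-\cdot)$, and the pullback law for the $C^\infty$ wave front set under the reflection $x \mapsto -x$ gives $\PWWF(u_0(-\cdot)) = -\PWWF(u_0) = (-1)^k \PWWF(u_0)$. In both cases this yields \eqref{PWpropWFG2}.

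The hard part will be \eqref{PWpropWFG2} in the odd case, since it does not follow from the symplectic covariance \eqref{PWeq:metaplecticWFG} of the Gabor wave front set alone. It requires identifying the metaplectic operator $\mu(-I_{2d})$ with the parity operator up to a phase, which one reads off from the intertwining relation $\mu(\chi)^{-1} a^w(x,D) \mu(\chi) = (a \circ \chi)^w(x,D)$, and then invoking the standard transformation of $\PWWF$ under the linear change of variables $x \mapsto -x$. The metaplectic phase is harmless here because neither wave front set is affected by multiplication with a nonzero constant.
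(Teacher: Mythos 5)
Your proposal is correct and follows essentially the same route as the paper: Corollary \ref{PWcor:WFGcompact} places $\PWWF_G(u_0)$ in $\{0\}\times(\PWrr d\setminus 0)$, the exact propagation formula \eqref{PWeq:propagationharmosc} moves it to $\{(\sin(2t)\xi,\cos(2t)\xi)\}$ so that Proposition \ref{PWprop:smooth} applies when $\sin(2t)\neq 0$, and for $t\in(\pi/2)\PWzo$ the propagator is identified with $\mu((-1)^k I_{2d})$, i.e.\ the identity or the parity operator up to a harmless phase, which gives both \eqref{PWpropWFG1} and \eqref{PWpropWFG2}. Your extra remark justifying $\mu(-I_{2d})$ as the parity operator via the intertwining relation is a slight elaboration of a step the paper simply asserts, not a different argument.
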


\begin{proof}
Corollary \ref{PWcor:WFGcompact} implies $\PWWF_G (u_0) \subseteq \{ 0 \} \times (\PWrr d \setminus 0)$.
Combined with \eqref{PWeq:propagationharmosc} this means that 
\begin{equation*}
\PWWF_G( e^{- t q^w(x,D)} u_0 ) \cap (\{0\} \times \PWrr d) = \emptyset
\end{equation*}
unless $t \in (\pi/2) \PWzo$. 
By Proposition \ref{PWprop:smooth} we then have  $e^{- t q^w(x,D)} u_0 \in C^\infty(\PWrr d)$ and the estimates \eqref{PWeq:growthestimate1} unless $t \in (\pi/2) \PWzo$. 

If $t=\pi k/2$ for $k \in \PWzo$ then $\cos(2t) = (-1)^k$ and $\sin(2t) = 0$. 
Thus \eqref{PWeq:propagationharmosc} yields the following conclusion. 
If $t = \pi k$ for $k \in \PWzo$ then $\PWWF_G( e^{- t q^w(x,D)} u_0 ) = \PWWF_G( u_0 )$ 
whereas if $t = \pi (k+1/2)$ for some $k \in \PWzo$ then $\PWWF_G( e^{- t q^w(x,D)} u_0 ) = - \PWWF_G( u_0 )$. 
This proves \eqref{PWpropWFG1}. 

When $k = 2 t /\pi \in \PWzo$ we have 
\begin{equation*}
e^{2 t \PWJ}
= \left(
\begin{array}{cc}
\cos(2t) I_d & \sin(2t) I_d \\
-\sin(2t) I_d & \cos(2t) I_d \\
\end{array}
\right) = (-1)^k I_{2d}
\end{equation*}
and therefore the corresponding metaplectic operator is 
$\mu(e^{2 t \PWJ}) f(x) = f( (-1)^k x)$.
Thus the propagator $e^{- t q^w(x,D)} = \mu(e^{2 t \PWJ})$ is the reflection operator  $f(x) \mapsto f((-1)^k x)$ when $k = 2 t /\pi \in \PWzo$, 
which justifies $\PWWF( e^{- t q^w(x,D)} u_0 )  = (-1)^{2t/\pi} \PWWF( u_0 )$ when $2 t /\pi \in \PWzo$, that is, \eqref{PWpropWFG2}.
\end{proof}

If $t \in \pi(1 + 2 \PWzo)/4$ then $e^{2 t \PWJ} = \pm \PWJ$ and consequently 
\begin{equation*}
e^{- t q^w(x,D)} = \mu(e^{2 t \PWJ}) = \mu(\pm \PWJ) 
= 
\left\{
\begin{array}{l}
(2 \pi)^{-d/2} \PWcF \\
(2 \pi)^{d/2} \PWcF^{-1}
\end{array}
\right.
\end{equation*}
see e.g. \cite{PWCarypis1}. 
When $t \in \pi(1 + 2 \PWzo)/4$ the estimates \eqref{PWeq:growthestimate1} are thus a consequence of the Paley--Wiener--Schwartz theorem \cite[Theorem~7.3.1]{PWHormander0}.
When $t \notin \pi \PWzo/4$ the estimates \eqref{PWeq:growthestimate1} reveals that the solution satisfies similar estimates as $\PWcF \PWcE' (\PWrr d)$.

\end{document}